\newtheorem{theorem}{Theorem}[section]
\newtheorem{lemma}[theorem]{Lemma}
\newtheorem{cor}[theorem]{Corollary}
\theoremstyle{definition}
\newtheorem{definition}[theorem]{Definition}
\newtheorem{example}[theorem]{Example}
\theoremstyle{remark}
\numberwithin{equation}{section}
\title{Sharp Bounds for Neighborhood degree based indices of Graphs}
\author{Sanju Vaidya}
\address{Math \& CS Department, Mercy University, 555 Broadway, Dobbs Ferry, NY 10522}
\curraddr{}
\email{SVaidya@mercy.edu}
\thanks{}
\author{Jeff Chang}
\address{Math \& CS Department, Mercy University, 555 Broadway, Dobbs Ferry, NY 10522}
\curraddr{}
\email{cchang4@mercy.edu}
\thanks{}
\subjclass[2010]{Primary 05C07, 05C12, 05C35, 05C90}
\begin{document}
\maketitle

\begin{abstract}

In this paper, we will construct formulas and bounds for Neighborhood Degree-based indices of graphs and describe graphs that  attain the bounds. Furthermore, we will establish a lower bound for the spectral radius of any graph.  

\end{abstract}

\section{Introduction}

 In the last forty years, many scientists have developed mathematical models for analyzing structures and properties of various chemical compounds. Graph theory plays a very important role in developing many types of models such as Quantitative Structure-Property Relationships (QSPR) models, and Quantitative Structure-Activity Relationships (QSAR) models (\cite{BGG},\cite{GFS}, \cite{DB},\cite{TC}). In molecular graphs of chemical compounds, vertices correspond to atoms and edges correspond to the bonds between them. A topological index (connectivity index) is a type of a molecular descriptor that is based on the molecular graph of a chemical compound. In 1947 Harry Weiner introduced a topological index related to molecular branching. He correlated the indices with the boiling points of certain chemical compounds, alkanes. This inspired many mathematicians and chemists to develop more topological indices for molecular graphs.

In 1972, Zagreb indices were introduced by Gutman and Trijanastic \cite{GT}. They are based on the degrees of the vertices. They are very useful in modeling chemical and biological properties of chemical compounds, as shown by Klein, D. J. \cite{DB}, Basak et al \cite{BGG}, and Todeschin et al \cite{TC}. Gutman et al \cite{GFMG} and Zhou et al \cite{B} used Zagreb indices for analyzing bounds for Estrada index, which is based on the eigenvalues of the adjacency matrix.  

Zagreb indices are generalized by many mathematicians and scientists. In 2019, Mondal et al \cite{MP} introduced the general neighborhood Zagreb index and proved its importance in predicting properties of chemical compounds. This index is based on the neighborhood degree of a vertex, which is the sum of the degrees of its adjacent vertices. In the last five years, many scientists used the neighborhood degree based indices to model various properties of chemical compounds which are used in medicines.  For example, Mondal et al  \cite{MP2}, investigated neighborhood degree based indices of some chemical compounds like remdesivir (GS-5734) which is useful for treatment of Covid-19 patients. In 2023, Balasubramaniyan and Chidambaram \cite{BC}, used these indices for analysis of asthma drugs. 

The main research question is how to compute bounds and formulas of the indices stated above and find graphs which attain the bounds. In this paper, we will establish formulas for the general neighborhood Zagreb index. Additionally, we will find upper and lower bounds, and describe graphs which attain the bounds.    Hu-Li-Shi-Xu-Gutman \cite{HLSXG} investigated the zeroth-order general Randic index (which is the same as general Zagreb index) for molecular graphs in which the maximum vertex degree is at most 4. In Theorem \ref{T:NB2} of  this paper we will extend this further to the general  neighborhood Zagreb index for graphs with maximum vertex degree more than 4.

In 2004,  Yu-Lu-Tian \cite{YLT} established a lower bound for the spectral radius of any graph, which is the largest eigenvalue of its adjacency matrix. They used the 2-degree of a vertex, which is the sum of the degrees of the adjacent vertices. So the 2-degree is the same as the neighborhood degree of a vertex. This is amazing! Using this and our lower bound for general neighborhood Zagreb index, we will find a lower bound for spectral radius of any graph in Section 4. Additionally, using generalization of Benakli, Halleck, and Kingan \cite{BHK}, we could define 2-distance neighborhood degree, $\delta_2(v)$, of any vertex v as the sum of the degrees of the vertices which are at distance 2 from the vertex v, which opens more research problems. 

In Section 2 of this paper, we will review definitions of the first Zagreb index and general neighborhood Zagreb index.   In Section 3 we will establish formulas of the general neighborhood Zagreb index for graphs. Additionally, we will find upper and lower bounds for the index and determine graphs which attain the bounds. In addition, we will establish sharp upper and lower bounds for the index for some special graphs. In Section 5 we will have discussion and conclusion.

\section{Review of Zagreb indices}

We will use the notation and terminology introduced in Gutman et al \cite{GFMG} and Mondal et al \cite{MP}.

 In {\bf molecular graphs} of chemical compounds, vertices correspond to individual atoms and edges correspond to the bonds between them. A {\bf topological index} (connectivity index) is a type of a molecular descriptor that is based on the molecular graph of a chemical compound.The definition of the first Zagreb index, which is introduced by Gutman and Trijanastic \cite{GT}, is as follows.
\begin{definition}
    Let $G$ be a graph with vertex set $V(G)$ and $d(u)$ be the degree of the vertex $u\in V(G)$. Then the first Zagreb index is defined as follows:
    \[Z_G=M_1(G)=\sum_{u\in V(G)}d(u)^2.\]
\end{definition}

In 2007 Gutman-Fortula-Marković-Glišić \cite{GFMG} proved that for a chemical tree (alkane) with $n$ vertices $Z_G=M_1(G)=6n-10-2n_2-2n_3$, where $n_i$ is the number of vertices of degree $i$ for i = 2, 3. In Vaidya and Surendran \cite{VS}, we established formulas of the first Zagreb index for molecular graphs of cycloalkanes, alkenes, and alkynes. 

Zagreb indices have been generalized and studied for more than 30 years. In 2019, Mondal et al \cite{MP} introduced the following general neighborhood Zagreb index.

\begin{definition}
    Let $G$ be a graph with vertex set $V(G)$. For a vertex $u\in V(G)$, let  $\delta_G(u)$ be its neighborhood degree, which is the sum of the degrees of the vertices which are adjacent to the vertex $u$. Let $\alpha$ be a nonzero real number such that $\alpha\neq 1$. Then the general neighborhood Zagreb index is defined as follows:
    \[NM_\alpha (G)=\sum_{u\in V(G)}\delta_G(u)^\alpha.\]
\end{definition}

\section{Formulas and bounds for Neighborhood Degree based indices}

In this section we will establish some formulas for the general neighborhood Zagreb index. We will then find sharp upper and lower bounds for it using these formulas. First, we need the following Lemma which is proved in Vaidya-Chang \cite{VC}.

\begin{lemma}\label{Lemma}
Let $p$ and $q$ be any positive integers such that $p < q$. Let $\alpha\in\mathbb{R}$ and $\alpha\not\in\{0,1\}$. Then we have the following.
\begin{enumerate}
\item If  the real number $\alpha<0$ or $\alpha>1$, then  $(p + i)^\alpha - p^\alpha - i(\frac{q^\alpha - p^\alpha}{q - p}) \leq 0$ for $1 \leq i \leq q - p - 1$  and if $0 < \alpha < 1$, then $(p + i)^\alpha - p^\alpha - i(\frac{q^\alpha - p^\alpha}{q - p}) \geq 0$ for $1 \leq i \leq q - p - 1$. 

\item if  the real number $\alpha<0$ or $\alpha>1$, then $(p + i)^\alpha - p^\alpha  - i((p + 1)^\alpha - p^\alpha) \geq 0$ for $2 \leq i \leq q - p$ and for $0 < \alpha < 1$, $(p + i)^\alpha - p^\alpha  - i((p + 1)^\alpha - p^\alpha) \leq 0$ for $2 \leq i \leq q - p$.  

\end{enumerate}
\end{lemma}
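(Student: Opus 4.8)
The plan is to deduce both parts from the convexity or concavity of the function $f(x)=x^{\alpha}$ on the half-line $(0,\infty)$. Since $f''(x)=\alpha(\alpha-1)x^{\alpha-2}$ and $x^{\alpha-2}>0$ for $x>0$, the function $f$ is strictly convex exactly when $\alpha(\alpha-1)>0$, i.e.\ when $\alpha<0$ or $\alpha>1$, and strictly concave exactly when $0<\alpha<1$. All the arguments involved, namely $p$, $p+i$, and $q$, are positive, because $p$ and $q$ are positive integers with $0<p<p+i<q$ when $1\le i\le q-p-1$ and $0<p<p+i\le q$ when $2\le i\le q-p$, so $f$ may legitimately be applied at each of them.

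For part (1) I would write $p+i$ as the convex combination $p+i=(1-t)\,p+t\,q$ with $t=\frac{i}{q-p}$, which lies in the open interval $(0,1)$ precisely when $1\le i\le q-p-1$. In the regime $\alpha<0$ or $\alpha>1$, convexity of $f$ gives $f(p+i)\le (1-t)f(p)+t\,f(q)$, which upon substituting $t$ and rearranging is exactly $(p+i)^{\alpha}-p^{\alpha}-i\bigl(\frac{q^{\alpha}-p^{\alpha}}{q-p}\bigr)\le 0$. In the regime $0<\alpha<1$ the same manipulation with the concavity inequality reverses the sign and yields the asserted $\ge 0$. At $i=q-p$ one would obtain equality, which is why the stated range stops at $q-p-1$.

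For part (2) the quickest route is the standard monotonicity of chord slopes of a convex function: if $f$ is convex on an interval and $a<b<c$ lie in it, then $\frac{f(b)-f(a)}{b-a}\le\frac{f(c)-f(a)}{c-a}$. Taking $a=p$, $b=p+1$, $c=p+i$ with $i\ge 2$ gives $(p+1)^{\alpha}-p^{\alpha}\le\frac{(p+i)^{\alpha}-p^{\alpha}}{i}$, i.e.\ $(p+i)^{\alpha}-p^{\alpha}-i\bigl((p+1)^{\alpha}-p^{\alpha}\bigr)\ge 0$ when $\alpha<0$ or $\alpha>1$; in the concave case $0<\alpha<1$ all inequalities reverse. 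Alternatively one can argue by induction on $i$: putting $g(i)=(p+i)^{\alpha}-p^{\alpha}-i\bigl((p+1)^{\alpha}-p^{\alpha}\bigr)$ one has $g(1)=0$ and $g(i+1)-g(i)=\bigl[(p+i+1)^{\alpha}-(p+i)^{\alpha}\bigr]-\bigl[(p+1)^{\alpha}-p^{\alpha}\bigr]$, which is $\ge 0$ in the convex case because the unit increments $x\mapsto f(x+1)-f(x)$ of a convex function are nondecreasing; hence $g$ is nondecreasing from $g(1)=0$, so $g(i)\ge 0$ for every $i\ge 2$ (and the signs reverse when $0<\alpha<1$).

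I do not expect a genuine obstacle, since the content is entirely classical; the only care required is bookkeeping. One must keep straight which direction each inequality points in the two $\alpha$-regimes, verify that the combination parameter $t$ stays in $[0,1]$ so that part (1) is correctly stated for $1\le i\le q-p-1$ rather than $i\le q-p$, and observe that although $f$ is decreasing when $\alpha<0$ it remains convex, so that case needs no separate treatment. If one wishes to avoid differentiating, convexity and concavity of $x^{\alpha}$ on $(0,\infty)$ are themselves standard facts that can be quoted from any text on inequalities (for instance via the weighted power mean inequality).
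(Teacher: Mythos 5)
Your argument is correct. Note, however, that the paper does not actually prove this lemma in-line: it states that the lemma ``is proved in Vaidya--Chang \cite{VC}'' and simply cites that reference, so there is no internal proof to compare against. Your proposal supplies a complete, self-contained argument of the standard kind one would expect: part (1) is the secant-line (convex-combination) inequality $f\bigl((1-t)p+tq\bigr)\le(1-t)f(p)+tf(q)$ for $f(x)=x^{\alpha}$ with $t=i/(q-p)\in(0,1)$, and part (2) is the monotonicity of chord slopes $\frac{f(b)-f(a)}{b-a}\le\frac{f(c)-f(a)}{c-a}$ applied at $a=p$, $b=p+1$, $c=p+i$, with all inequalities reversing in the concave regime $0<\alpha<1$. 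The sign analysis of $f''(x)=\alpha(\alpha-1)x^{\alpha-2}$, the observation that all arguments are positive, and the remark that $f$ is convex even when it is decreasing ($\alpha<0$) are all handled correctly; the inductive alternative for part (2) via nondecreasing unit increments of a convex function is also sound. This is very likely the same convexity mechanism underlying the proof in \cite{VC}, so no discrepancy of substance arises; your version has the advantage of making the paper self-contained.
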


\begin{theorem}\label{T:NB}
    Let $G$ be a graph with $n$ vertices and $m$ edges where $n\geq 3$. Let $n_i$ denote the number of vertices of neighborhood degree $i$. Let $\delta$ and  $\Delta$ respectively denote the minimum and the maximum neighborhood degree.  Assume that $\delta \not= \Delta$. Let $\alpha\in\mathbb{R}$ and $\alpha\not\in\{0,1\}$.  Further denote $s_\alpha=\frac{\Delta^\alpha- \delta^\alpha}{\Delta- \delta}$. Then we have the following.
    \begin{enumerate}
        \item The general neighborhood Zagreb index
        \[NM_{\alpha}(G)=n\delta^\alpha +  (M_1 - n\delta)s_\alpha +\sum_{i =1}^{\Delta - \delta - 1}n_{\delta + i} \left[(\delta + i)^\alpha - \delta^\alpha  - is_\alpha\right].\]

Additionally, if  $\alpha<0$ or $\alpha>1$ the coefficients  $(\delta + i)^\alpha - \delta^\alpha - is_\alpha \leq 0$ for $1 \leq i \leq \Delta -\delta - 1$  and for $0 < \alpha < 1$, the coefficients $(\delta + i)^\alpha - \delta^\alpha - is_\alpha \geq 0$ for $1 \leq i \leq \Delta -\delta - 1$.

        \item The general neighborhood Zagreb index

        \begin{align*}
            NM_{\alpha}(G)=n\delta^\alpha +  (M_1 - n\delta) (&(\delta + 1)^\alpha - \delta^\alpha)  +\\
            & \sum_{i = 2}^{\Delta - \delta }n_{\delta + i} \left[(\delta + i)^\alpha - \delta^\alpha  - i((\delta + 1)^\alpha - \delta^\alpha)\right].
        \end{align*}
 
Additionally, if  $\alpha<0$ or $\alpha>1$, the coefficients $(\delta + i)^\alpha - \delta^\alpha  - i((\delta + 1)^\alpha - \delta^\alpha) \geq 0$ for $2 \leq i \leq \Delta -\delta$ and for $0 < \alpha < 1$, the coefficients $(\delta + i)^\alpha - \delta^\alpha  - i((\delta + 1)^\alpha - \delta^\alpha) \leq 0$ for $2 \leq i \leq \Delta -\delta $.      
    \end{enumerate}
\end{theorem}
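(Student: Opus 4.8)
The plan is to reduce both identities to elementary bookkeeping after recording two counting facts, and then to read off the sign assertions directly from Lemma~\ref{Lemma}. Grouping the vertices of $G$ according to their neighborhood degree immediately gives $NM_\alpha(G)=\sum_{i=\delta}^{\Delta} i^\alpha n_i$ and $n=\sum_{i=\delta}^{\Delta} n_i$. The one nontrivial input is the identity $\sum_{i=\delta}^{\Delta} i\,n_i=\sum_{u\in V(G)}\delta_G(u)=M_1$: expanding $\sum_{u}\delta_G(u)=\sum_{u}\sum_{v\sim u}d(v)$ and counting each $d(v)$ once for every neighbor of $v$, i.e.\ $d(v)$ times, yields $\sum_{v}d(v)^2=M_1(G)$. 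Equivalently, $M_1-n\delta=\sum_{i=1}^{\Delta-\delta} i\,n_{\delta+i}$, which is the form I will actually use.

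For part~(1), I would substitute $n\delta^\alpha=\sum_{i=0}^{\Delta-\delta} n_{\delta+i}\,\delta^\alpha$ and $(M_1-n\delta)s_\alpha=s_\alpha\sum_{i=1}^{\Delta-\delta} i\,n_{\delta+i}$ into the claimed right-hand side and collect the coefficient of each $n_{\delta+i}$. For $1\le i\le \Delta-\delta-1$ the term $+is_\alpha$ produced above cancels the $-is_\alpha$ inside the bracket, leaving coefficient $(\delta+i)^\alpha$; for $i=0$ the coefficient is plainly $\delta^\alpha=(\delta+0)^\alpha$; and for $i=\Delta-\delta$ the only surviving contributions are $\delta^\alpha$ and $(\Delta-\delta)s_\alpha$, whose sum equals $\delta^\alpha+(\Delta^\alpha-\delta^\alpha)=\Delta^\alpha$ precisely because $s_\alpha=(\Delta^\alpha-\delta^\alpha)/(\Delta-\delta)$. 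Hence the right-hand side collapses to $\sum_{i=\delta}^{\Delta} i^\alpha n_i=NM_\alpha(G)$.

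Part~(2) is the same computation with $s_\alpha$ replaced throughout by $(\delta+1)^\alpha-\delta^\alpha$ and with the summation index starting at $2$; the $i=0$ and $i=1$ endpoints are then checked by hand, the case $i=1$ using $\delta^\alpha+\big((\delta+1)^\alpha-\delta^\alpha\big)=(\delta+1)^\alpha$, while for $2\le i\le \Delta-\delta$ the $i\big((\delta+1)^\alpha-\delta^\alpha\big)$ terms again cancel and leave $(\delta+i)^\alpha$. As for the inequalities on the coefficients, they are immediate from Lemma~\ref{Lemma} applied with $p=\delta$ and $q=\Delta$ (legitimate since by hypothesis $\delta<\Delta$ are positive integers), because then $(q^\alpha-p^\alpha)/(q-p)=s_\alpha$: part~(1) of the Lemma gives exactly the sign of $(\delta+i)^\alpha-\delta^\alpha-is_\alpha$ for $1\le i\le \Delta-\delta-1$, and part~(2) gives the sign of $(\delta+i)^\alpha-\delta^\alpha-i\big((\delta+1)^\alpha-\delta^\alpha\big)$ for $2\le i\le \Delta-\delta$, with the direction of the inequality reversing between the regimes ``$\alpha<0$ or $\alpha>1$'' and ``$0<\alpha<1$'' exactly as stated.

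There is no genuine obstacle here beyond careful bookkeeping; the point to watch is that each formula's sum deliberately omits the extreme neighborhood degrees --- it stops at $\Delta-\delta-1$ in part~(1) and starts at $2$ in part~(2) --- so those endpoint terms must be verified separately, and in part~(1) the $i=\Delta-\delta$ endpoint is exactly where the definition of $s_\alpha$ is used.
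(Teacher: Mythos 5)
Your proposal is correct and follows essentially the same route as the paper: the same two counting identities ($n=\sum n_i$ and $\sum i\,n_i=M_1$, which the paper cites from \cite{MDP} and you verify by double counting) combined with Lemma~\ref{Lemma} applied at $p=\delta$, $q=\Delta$ for the sign claims. The only difference is cosmetic --- you verify the identities by collecting the coefficient of each $n_{\delta+i}$ on the right-hand side, whereas the paper derives them by solving for $n_\delta$, $n_\Delta$ (resp.\ $n_\delta$, $n_{\delta+1}$) and substituting.
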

\begin{proof}
We have $\sum_{i= \delta}^\Delta in_i= M_1$ by part (i) of Lemma 1 of  \cite{MDP}. We also have $n=\sum_{i= \delta}^\Delta n_i$. Solving these equations, we will rewrite $n_\Delta$ as follows.
\[
n_\Delta=\frac{M_1 - n\delta - \sum_{i=\delta + 1}^{\Delta  - 1}((i - \delta)n_i}{\Delta - \delta}.
\]

Write $NM_{\alpha} (G)$ in terms of $n_i$, $\delta\leq i\leq \Delta$, and substitute $n_\delta$ with $n-\sum_{i=\delta + 1}^\Delta n_i$, we obtain
\[
NM_{\alpha} (G)=\sum_{i=\delta}^\Delta  i^\alpha n_i = \delta^\alpha( n - \sum_{i= \delta + 1}^\Delta n_i) + \sum_{i=\delta + 1}^\Delta  i^\alpha n_i = n\delta^\alpha + \sum_{i = \delta + 1}^\Delta  n_i(i^\alpha - \delta^\alpha).
\]
Further substitute $n_\Delta$ into the sum and using Lemma \ref{Lemma}, we get the result in part (1). Using the identities $n_\delta+n_{\delta + 1} = n - \sum_{i = \delta + 2}^\Delta n_i$ and $\delta n_\delta + (\delta + 1)n_{\delta + 1} = M_1 - \sum_{i = \delta + 2}^\Delta  in_i$, we solve for $n_\delta$ and $n_{\delta + 1}$. Substituting $n_\delta$ and $n_{\delta + 1}$ into $NM_{\alpha}(G)=\sum_{i=\delta}^\Delta  i^\alpha n_i$, and using Lemma \ref{Lemma} again, we get the result in part (2). 

\end{proof}

The following corollary gives upper bounds or lower bounds for the general Zagreb index, depending on the values of $\alpha$. Both the bounds are sharp. 

\begin{cor}\label{C:NB}
    Let $G$ be a graph with $n$ vertices and $m$ edges where $n\geq 3$. Let $n_i$ denote the number of vertices of neighborhood degree $i$. Let $\delta$ and  $\Delta$ respectively denote the minimum and the maximum neighborhood degree.  Assume that $\delta \not= \Delta$. Let $\alpha\in\mathbb{R}$ and $\alpha\not\in\{0,1\}$.  Further denote $s_\alpha=\frac{\Delta^\alpha- \delta^\alpha}{\Delta- \delta}$. Then we have the following. 

\begin{enumerate}
        \item If $\alpha<0$ or $\alpha>1$, the upper bound for the general neighborhood Zagreb index is $ NM_{\alpha}(G) \leq n\delta^\alpha +  (M_1 - n\delta)s_\alpha$.

    \item If $0<\alpha<1$, it becomes the lower bound for the general first Zagreb index, that is, $ NM_{\alpha}(G) \geq n\delta^\alpha +  (M_1 - n\delta)s_\alpha$.
    
Moreover, for parts (1) and (2), the equality occurs for any graph with vertices of neighborhood degrees $\delta$ and $\Delta.$

\item If $\alpha<0$ or $\alpha>1$, then the lower bound for the general neighborhood Zagreb index is $NM_{\alpha}(G) \geq n\delta^\alpha +  (M_1 - n\delta) ((\delta + 1)^\alpha - \delta^\alpha)  + n_{\Delta} (\Delta^\alpha - \delta^\alpha - (\Delta - \delta) ((\delta + 1)^\alpha - \delta^\alpha))$.

\item If $0<\alpha<1$, then the upper bound for the general neighborhood Zagreb index is $NM_{\alpha}(G) \leq n\delta^\alpha +  (M_1 - n\delta) ((\delta + 1)^\alpha - \delta^\alpha)  + n_{\Delta} (\Delta^\alpha - \delta^\alpha - (\Delta - \delta) ((\delta + 1)^\alpha - \delta^\alpha))$.
   
Moreover, for parts (3) and (4), the equality occurs if the graph G is a path.
    
\end{enumerate}
       
\end{cor}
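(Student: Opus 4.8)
The plan is to read the four inequalities directly off the two identities of Theorem \ref{T:NB}, using the sign information on the bracketed coefficients recorded there.

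For parts (1) and (2) I would start from the identity in Theorem \ref{T:NB}(1),
\[
NM_{\alpha}(G)=n\delta^\alpha +  (M_1 - n\delta)s_\alpha +\sum_{i =1}^{\Delta - \delta - 1}n_{\delta + i}\left[(\delta + i)^\alpha - \delta^\alpha  - is_\alpha\right],
\]
and note that each $n_{\delta+i}\ge 0$, while by the second assertion of Theorem \ref{T:NB}(1) every bracket $(\delta+i)^\alpha-\delta^\alpha-is_\alpha$ is $\le 0$ when $\alpha<0$ or $\alpha>1$ and is $\ge 0$ when $0<\alpha<1$. Hence the entire sum is $\le 0$ in the first case and $\ge 0$ in the second, which is exactly the upper bound of part (1) and the lower bound of part (2). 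The sum vanishes precisely when $n_{\delta+i}=0$ for every $1\le i\le \Delta-\delta-1$, i.e. when every vertex of $G$ has neighborhood degree $\delta$ or $\Delta$; this is the claimed equality case.

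For parts (3) and (4) I would argue in the same way from the identity in Theorem \ref{T:NB}(2), except that I retain the single term with $i=\Delta-\delta$ (its coefficient is $n_\Delta$ and its bracket is $\Delta^\alpha-\delta^\alpha-(\Delta-\delta)\bigl((\delta+1)^\alpha-\delta^\alpha\bigr)$) and discard only the terms with $2\le i\le \Delta-\delta-1$. By the second assertion of Theorem \ref{T:NB}(2) those discarded brackets are $\ge 0$ when $\alpha<0$ or $\alpha>1$, so dropping them lowers the right-hand side and yields the lower bound of part (3), and they are $\le 0$ when $0<\alpha<1$, so dropping them raises the right-hand side and yields the upper bound of part (4). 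Equality holds precisely when $n_{\delta+i}=0$ for $2\le i\le \Delta-\delta-1$, i.e. when every neighborhood degree occurring in $G$ lies in $\{\delta,\delta+1,\Delta\}$.

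To finish parts (3) and (4) I would check that a path satisfies this last condition: in $P_n$ the two endpoints have neighborhood degree $2$, the two vertices adjacent to an endpoint have neighborhood degree $3$, and any remaining (interior) vertex has neighborhood degree $4$, so the set of neighborhood degrees is contained in $\{2,3,4\}=\{\delta,\delta+1,\Delta\}$ and all the discarded terms are zero. The arithmetic is routine throughout; the only point needing a little care is the bookkeeping for small paths — verifying that $P_3$ is excluded because there $\delta=\Delta$, and that for $P_4$ one has $\Delta-\delta=1$, so the retained $n_\Delta$ term is vacuous and the stated bound degenerates consistently. Thus the ``main obstacle'', such as it is, is merely making the small-$n$ cases agree with the standing hypothesis $\delta\ne\Delta$.
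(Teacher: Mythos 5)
Your proposal is correct and takes exactly the route the paper intends: its proof of the corollary is the single line ``Follows from Theorem \ref{T:NB},'' and your argument is precisely the expansion of that — reading each bound off the corresponding identity by discarding the signed bracketed terms, with the equality cases characterized by the vanishing of the discarded terms. Your verification that paths realize the equality in parts (3) and (4), and your handling of the degenerate small-$n$ cases, supply details the paper omits but are consistent with it.
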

\begin{proof}
Follows from Theorem \ref{T:NB}. 
\end{proof}

\begin{example}
Here is an example. In Figure \ref{fig:1}, $\delta=4, \Delta = 10, m=n=12$ and all vertices have either neighborhood degree $4$ or $10$. It attains the bounds given in parts (1) and (2) of  Corollary \ref{C:NB}.
\begin{figure}[h]
\centering
\begin{minipage}{.4\textwidth}
  \centering
  \begin{tikzpicture}[scale=0.3]
\draw[ultra thick] (-3,3) -- (3,3) -- (3,-3)--(-3,-3) -- (-3,3);
\draw[ultra thick] (3,3)--(6,4);
\draw[ultra thick] (3,3)--(4,6);
\draw[ultra thick] (-3,3)--(-4,6);
\draw[ultra thick] (-3,3)--(-6,4);
\draw[ultra thick] (3,-3)--(6,-4);
\draw[ultra thick] (3,-3)--(4,-6);
\draw[ultra thick] (-3,-3)--(-6,-4);
\draw[ultra thick] (-3,-3)--(-4,-6);
\filldraw[black] (3,3) circle (10pt);
\filldraw[black] (-3,3) circle (10pt);
\filldraw[black] (3,-3) circle (10pt);
\filldraw[black] (-3,-3) circle (10pt);
\filldraw[black] (6,4) circle (10pt);
\filldraw[black] (4,6) circle (10pt);
\filldraw[black] (-6,4) circle (10pt);
\filldraw[black] (-4,6) circle (10pt);
\filldraw[black] (6,-4) circle (10pt);
\filldraw[black] (4,-6) circle (10pt);
\filldraw[black] (-6,-4) circle (10pt);
\filldraw[black] (-4,-6) circle (10pt);
\end{tikzpicture}
\caption{}
  \label{fig:1}
\end{minipage}%

\end{figure}
\end{example}

In Theorem (2.2) of \cite{HLSXG}, Hu-Li-Shi-Xu-Gutman  established formulas for the maximum and the minimum zeroth-order general Randic index (which is same as general Zagreb index) for $(n, m)$ molecular graphs if $2m - n$ is congruent to $0, 1,$ or $2$ modulo $3$ and the maximum vertex degree is $4$. In the following Theorem, we will extend this and find sharp bounds for the general neighborhood Zagreb index if $M_1 - n \delta$ is congruent to any integer $r$ modulo $\Delta - \delta$ where $0 \leq r \leq \Delta - \delta- 1$ and the maximum neighborhood degree $\Delta \geq 3 $.

\begin{theorem}\label{T:NB2}
    Let $G$ be a graph with $n$ vertices and $m$ edges where $n\geq 3$. Let $n_i$ denote the number of vertices of neighborhood degree $i$. Let $\delta$ and  $\Delta$ respectively denote the minimum and the maximum neighborhood degree. Assume that $\Delta - \delta\geq 2$ and $M_1 - n\delta = q(\Delta - \delta) + r$, where q is a positive integer and r is an integer such that $0 \leq r \leq \Delta - \delta - 1$. Let $\alpha\in\mathbb{R}$ and $\alpha\not\in\{0,1\}$.  Further denote $s_\alpha=\frac{\Delta^\alpha- \delta^\alpha}{\Delta-\delta}$.  Then we have the following.

 \begin{enumerate}
        \item If $r = 0$ and $n_{\Delta} = q$, then the graph G is a bi-neighborhood degree graph with vertices of neighborhood degree $\delta$ and $\Delta.$. 

  \item If $r \geq 1$ and $n_{\Delta} = q$, then $n_i = 0$ for $\delta + r + 1 \leq i \leq \Delta  - 1$ and $n_{\delta + r} \leq 1$.

\item If $r \geq 1$ and $\alpha<0$ or $\alpha>1$ and $n_{\delta + r } \not = 0$ , then the upper bound for the general neighborhood Zagreb index is
 $NM_{\alpha}(G)\leq n\delta^\alpha +  (M_1 - n\delta)s_\alpha +\ (\delta + r)^\alpha - \delta^\alpha  - rs_\alpha$.

   \item  If $r \geq 1$ and $0<\alpha<1$ and $n_{\delta + r} \not = 0$, it becomes the lower bound for the general neighborhood Zagreb index, that is,  $NM_{\alpha}(G)\geq n\delta^\alpha +  (M_1 - n\delta)s_\alpha +\ (\delta + r)^\alpha - \delta^\alpha  - rs_\alpha$.

 \end{enumerate}
        
Additionally, in both parts (3) and (4) the equality occurs if the graph G has $q$ vertices of neighborhood degree $\Delta$, a unique vertex of neighborhood degree $\delta + r$, and $n - q - 1$ vertices of neighborhood degree $\delta$.
 
\end{theorem}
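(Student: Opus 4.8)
The plan is to reduce everything to the two counting identities $\sum_{i=\delta}^{\Delta} n_i = n$ and $\sum_{i=\delta}^{\Delta} i\,n_i = M_1$ (the latter being part (i) of Lemma 1 of \cite{MDP}, already invoked in the proof of Theorem \ref{T:NB}). Subtracting $\delta$ times the first identity from the second yields the single workhorse relation
\[
\sum_{i=\delta+1}^{\Delta} (i-\delta)\,n_i \;=\; M_1 - n\delta \;=\; q(\Delta-\delta)+r .
\]
For part (1), substituting $r=0$ and $n_\Delta=q$ makes the $i=\Delta$ term equal to $q(\Delta-\delta)$, so $\sum_{i=\delta+1}^{\Delta-1}(i-\delta)n_i=0$; since each coefficient $i-\delta\ge 1$ and each $n_i\ge 0$, every $n_i$ with $\delta+1\le i\le\Delta-1$ must vanish, which is exactly the statement that $G$ is a bi-neighborhood-degree graph on the values $\delta$ and $\Delta$. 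For part (2), with $r\ge 1$ and $n_\Delta=q$ the same computation leaves $\sum_{i=\delta+1}^{\Delta-1}(i-\delta)n_i=r$. If some $n_i$ with $\delta+r+1\le i\le\Delta-1$ were positive, then the single term $(i-\delta)n_i\ge r+1$ would already exceed $r$, a contradiction; hence $n_i=0$ on that range, and the surviving identity $\sum_{i=\delta+1}^{\delta+r}(i-\delta)n_i=r$ forces $r\,n_{\delta+r}\le r$, i.e. $n_{\delta+r}\le 1$.

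For parts (3) and (4) I would work directly from part (1) of Theorem \ref{T:NB}. Writing $c_i=(\delta+i)^\alpha-\delta^\alpha-i s_\alpha$, that formula reads $NM_{\alpha}(G)=n\delta^\alpha+(M_1-n\delta)s_\alpha+\sum_{i=1}^{\Delta-\delta-1}n_{\delta+i}c_i$, and Theorem \ref{T:NB} already tells us that all the $c_i$ satisfy $c_i\le 0$ when $\alpha<0$ or $\alpha>1$, and $c_i\ge 0$ when $0<\alpha<1$ (note $1\le r\le\Delta-\delta-1$, so $c_r$ lies in this range). Since $n_{\delta+r}$ is a nonnegative integer and $n_{\delta+r}\ne 0$, we have $n_{\delta+r}\ge 1$, which lets me peel off exactly one copy of $c_r$:
\[
\sum_{i=1}^{\Delta-\delta-1}n_{\delta+i}c_i \;=\; c_r \;+\;(n_{\delta+r}-1)\,c_r\;+\;\sum_{\substack{1\le i\le \Delta-\delta-1 \\ i\neq r}} n_{\delta+i}c_i .
\]
The last two groups are sums of terms each having the sign of $c_r$ (or zero), because $n_{\delta+r}-1\ge 0$ and $n_{\delta+i}\ge 0$. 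Hence the whole sum is $\le c_r$ when $\alpha<0$ or $\alpha>1$, which gives the upper bound of part (3), and $\ge c_r$ when $0<\alpha<1$, which gives the lower bound of part (4).

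For the equality assertion I would simply test the proposed distribution $n_\Delta=q$, $n_{\delta+r}=1$, $n_\delta=n-q-1$, all other $n_i=0$: it satisfies $\sum_i n_i=n$ and $\sum_i i\,n_i=\delta n+q(\Delta-\delta)+r=M_1$, so it is consistent with the hypotheses, and then $NM_{\alpha}(G)=\sum_i i^\alpha n_i=(n-q-1)\delta^\alpha+(\delta+r)^\alpha+q\Delta^\alpha$, which a one-line rearrangement using $q(\Delta-\delta)s_\alpha=q(\Delta^\alpha-\delta^\alpha)$ shows equals $n\delta^\alpha+(M_1-n\delta)s_\alpha+(\delta+r)^\alpha-\delta^\alpha-r s_\alpha$. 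I do not expect a genuine obstacle here: the argument is elementary bookkeeping with the two counting identities together with the coefficient-sign information supplied by Theorem \ref{T:NB}. The only points needing care are keeping the inequality directions straight across the three sign regimes of $\alpha$, and making sure the ``peel off one copy of $c_r$'' step actually uses the integrality of $n_{\delta+r}$ (so that $n_{\delta+r}-1\ge 0$) rather than merely $n_{\delta+r}>0$.
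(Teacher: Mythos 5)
Your proposal is correct and follows essentially the same route as the paper: both derive parts (1) and (2) from the identity $\sum_{i=\delta+1}^{\Delta}(i-\delta)n_i=M_1-n\delta$, and both obtain parts (3) and (4) by applying part (1) of Theorem \ref{T:NB}, discarding the nonpositive (resp.\ nonnegative) terms other than $i=r$, and using $n_{\delta+r}\ge 1$ to replace $n_{\delta+r}c_r$ by $c_r$. Your explicit verification of the equality case is slightly more detailed than the paper's appeal to Theorem \ref{T:NB}, but the argument is the same.
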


\begin{proof}
We have $\sum_{i= \delta}^\Delta in_i= M_1$ by part (i) of Lemma 1 of  \cite{MDP}. We also have $n=\sum_{i= \delta}^\Delta n_i$. By solving the equations, we get $\sum_{i=\delta + 1}^\Delta (i - \delta)n_i= M_1 - n\delta = q(\Delta - \delta) + r$. If $r = 0$ and $n_{\Delta} = q$, then $n_i = 0$ for $ \delta + 1 \leq i \leq \Delta - 1$. So we get (1). Also if $r \geq 1$ and $n_{\Delta} = q$,then $n_i = 0$ for $\delta + r + 1 \leq i \leq \Delta - 1$ and $n_{\delta + r} \leq 1$. So we get (2). To prove (3), let $r\geq 1$ and $n_{\delta + r } \not =  0$.
By part (1) of Theorem \ref{T:NB} we have the general neighborhood Zagreb index 
$NM_{\alpha}(G)=n\delta^\alpha +  (M_1 - n\delta)s_\alpha +\sum_{i =1}^{\Delta - \delta - 1}n_{\delta + i} \left[(\delta + i)^\alpha - \delta^\alpha  - is_\alpha\right].$ If $\alpha<0$ or $\alpha>1$, then $(\delta + i)^\alpha - \delta^\alpha - is_\alpha \leq 0$ for $1 \leq i \leq \Delta -\delta - 1$. So $NM_{\alpha}(G) \leq n\delta^\alpha +  (M_1 - n\delta)s_\alpha + n_{\delta + r} [(\delta + r)^\alpha - \delta^\alpha  - (r)s_\alpha].$

 Since $n_{\delta + r } \not =  0$, the result in part (3) follows. Similarly, the result in part (4) follows since for $0<\alpha<1$, $(\delta + i)^\alpha - \delta^\alpha - is_\alpha \geq 0$ for $1 \leq i \leq \Delta -\delta - 1$. In addition, by part (1) of Theorem (3.2), for both parts (3) and (4) the equality occurs if the graph G has $q$ vertices of neighborhood degree $\Delta$, a unique vertex of neighborhood degree $\delta + r$, and $n - q - 1$ vertices of neighborhood degree $\delta$.

\end{proof}

\begin{example}
For the graph in Figure \ref{fig:2},  we have $n = 5, \delta = 3, \Delta = 5$ and $M_1 - n\delta = 7 = 3(\Delta - \delta) + 1$. The graph has $q =3$ vertices of neighborhood degree $\Delta = 5$, a unique vertex of neighborhood degree $\delta + r = 4$, and the remaining $n - q - 1$ vertices of neighborhood degree $\delta = 3$. It attains the bounds in parts (3) and (4) of Theorem \ref{T:NB2}.

\begin{figure}
\begin{minipage}{.4\textwidth}
  \centering
  \begin{tikzpicture}[scale=0.3]
\draw[ultra thick] (-3,3) -- (3,3) -- (3,-3)--(-3,-3) -- (-3,3);
\draw[ultra thick] (3,3)--(6,4);
\filldraw[black] (3,3) circle (10pt);
\filldraw[black] (-3,3) circle (10pt);
\filldraw[black] (3,-3) circle (10pt);
\filldraw[black] (-3,-3) circle (10pt);
\filldraw[black] (6,4) circle (10pt);
\end{tikzpicture}
\caption{}
  \label{fig:2}
\end{minipage}
\end{figure}
\end{example}

\section{Spectral Radius, neighborhood degree, and its generalization}

In 2004, Yu-Lu-Tian \cite{YLT} established a lower bound for the spectral radius, $\rho (G)$  of any graph $G$, which is the largest eigenvalue of its adjacency matrix. They used the 2-degree of a vertex, which is the sum of the degrees of the adjacent vertices. So the 2-degree is the same as the neighborhood degree of a vertex. They proved that the $\rho^2 \geq  \frac {NM_2(G)}{M_1}$. Using this and part (2) of Theorem \ref{T:NB}, we get the following Theorem.

\begin{theorem}
Let $G$ be a graph with $n$ vertices and $m$ edges where $n\geq 3$. Let $\delta$ denote the minimum neighborhood degree. Then the spectral radius,  
$\rho^2 \geq  \frac {M_1(2\delta +1) - n\delta^2 - n\delta}{M_1}$.

\end{theorem}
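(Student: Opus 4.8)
The plan is to chain together two inequalities already available: the Yu--Lu--Tian bound $\rho^2 \geq NM_2(G)/M_1$ and the lower bound for $NM_\alpha(G)$ coming from part (2) of Theorem \ref{T:NB}, specialized to $\alpha = 2$. Before invoking Theorem \ref{T:NB} I would first dispose of the degenerate case $\delta = \Delta$ (which that theorem excludes): in that case every vertex has neighborhood degree $\delta$, so $M_1 = \sum_i i\,n_i = n\delta$ and $NM_2(G) = n\delta^2$, and a direct substitution shows that both $NM_2(G)/M_1$ and the claimed right-hand side equal $\delta$, so the asserted inequality holds (with equality). Thus we may assume $\delta \neq \Delta$, and also that $G$ has at least one edge so that $M_1 \neq 0$ and the quotient is defined.

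For the main case, set $\alpha = 2$ in part (2) of Theorem \ref{T:NB}. Using $(\delta+1)^2 - \delta^2 = 2\delta + 1$, this reads
$NM_2(G) = n\delta^2 + (M_1 - n\delta)(2\delta+1) + \sum_{i=2}^{\Delta-\delta} n_{\delta+i}\bigl[(\delta+i)^2 - \delta^2 - i(2\delta+1)\bigr]$.
Since $2 > 1$, the additional assertion in part (2) of Theorem \ref{T:NB} guarantees that every bracketed coefficient is $\geq 0$, and as each $n_{\delta+i} \geq 0$ the whole sum is nonnegative. Hence $NM_2(G) \geq n\delta^2 + (M_1 - n\delta)(2\delta+1)$ (equivalently, this is the $\alpha=2$ instance of part (3) of Corollary \ref{C:NB}, after discarding the nonnegative $n_\Delta$ term).

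Finally I would expand the right-hand side: $n\delta^2 + (M_1 - n\delta)(2\delta+1) = M_1(2\delta+1) - n\delta^2 - n\delta$, and combine with Yu--Lu--Tian to get $\rho^2 \geq NM_2(G)/M_1 \geq \dfrac{M_1(2\delta+1) - n\delta^2 - n\delta}{M_1}$, as claimed. The argument is a short composition of two established inequalities, so there is no real obstacle; the only points needing care are the separate treatment of the boundary case $\delta = \Delta$ (since Theorem \ref{T:NB} assumes $\delta \neq \Delta$) and recording the implicit hypothesis $M_1 \neq 0$ so that the division is legitimate.
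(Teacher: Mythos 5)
Your proposal is correct and follows exactly the route the paper takes: combine the Yu--Lu--Tian bound $\rho^2 \geq NM_2(G)/M_1$ with part (2) of Theorem \ref{T:NB} at $\alpha = 2$, where the coefficients $(\delta+i)^2 - \delta^2 - i(2\delta+1)$ are nonnegative. Your write-up is in fact more careful than the paper's one-line proof, since you also handle the excluded case $\delta = \Delta$ and verify the algebraic simplification explicitly.
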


\begin{proof}
Follows from Theorem 4 of Yu-Lu-Tian \cite{YLT} and part 2 of Theorem \ref{T:NB}.
\end {proof}

By using generalization of \cite{BHK}, we could define 2-distance neighborhood degree, $\delta_2(v)$, of any vertex $v$ as the sum of the degrees of the vertices which are at distance 2 from the vertex $v$. Further we could define the general 2-distance neighborhood degree index as $NM2_\alpha (G)=\sum_{u\in V(G)}\delta_2(u)^\alpha$. In the following Theorem, we will establish some formulas for the 2-distance neighborhood degree index if the diameter of the graph is 2. 

\begin{theorem}
    Let $G$ be a graph of diameter 2 with $n$ vertices and $m$ edges where $n\geq 3$. Let $n_i$ denote the number of vertices of 2-distance neighborhood degree $i$. Let $d$ and  $D$ respectively denote the minimum and the maximum 2-distance neighborhood degree.  Assume that $ d \not = 0$ and $d \not= D$. Let $\alpha\in\mathbb{R}$ and $\alpha\not\in\{0,1\}$.  Further denote $S_\alpha=\frac{D^\alpha- d^\alpha}{D - d}$. Then we have the following.
    \begin{enumerate}
        \item The 2-distance neighborhood degree index
        \[NM2_\alpha(G)=nd^\alpha +  (2m(n - 1) - M_1 - nd)S_\alpha +\sum_{i =1}^{D - d - 1}n_{d + i} \left[(d + i)^\alpha - d^\alpha  - iS_\alpha\right].\]

        \item The 2-distance neighborhood degree index

        \begin{align*}
            NM2_\alpha(G)=nd^\alpha +  (2m(n - 1) - M_1 &- nd) ((d + 1)^\alpha - d^\alpha)+  \\
            &\sum_{i = 2}^{D - d }n_{d + i} \left[(d + i)^\alpha - d^\alpha  - i((d + 1)^\alpha - d^\alpha)\right].
        \end{align*}

    \end{enumerate}
\end{theorem}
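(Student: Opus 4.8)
The plan is to reduce the statement to Theorem \ref{T:NB} by first writing the $2$-distance neighborhood degree in closed form. The key observation is that in a graph of diameter $2$ every vertex $u \neq v$ lies at distance $1$ or $2$ from $v$, so the set of vertices at distance exactly $2$ from $v$ is precisely $V(G) \setminus (\{v\}\cup N(v))$. Since $\sum_{u\in V(G)} d(u) = 2m$, this gives
\[
\delta_2(v) \;=\; \sum_{u\in V(G)} d(u) \;-\; d(v) \;-\; \sum_{u\in N(v)} d(u) \;=\; 2m - d(v) - \delta_G(v),
\]
where $\delta_G(v)$ is the (ordinary) neighborhood degree. This identity holds even when $v$ is adjacent to every other vertex, in which case both sides vanish; the hypothesis $d\neq 0$ is therefore not needed for the identity itself, only to make $S_\alpha$ (and, for $\alpha<0$, the terms $d^\alpha$) well defined, and $d\neq D$ is used only so that $S_\alpha$ is defined.

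Next I would sum the identity over all $v\in V(G)$. Using $\sum_{v} d(v) = 2m$ and the standard double-count $\sum_{v}\delta_G(v) = \sum_{v}\sum_{u\sim v} d(u) = \sum_{u} d(u)^2 = M_1$ (this is part (i) of Lemma 1 of \cite{MDP}), I obtain
\[
\sum_{v\in V(G)} \delta_2(v) \;=\; 2mn - 2m - M_1 \;=\; 2m(n-1) - M_1 .
\]
In terms of the counting function this says $\sum_{i=d}^{D} i\,n_i = 2m(n-1) - M_1$, and trivially $\sum_{i=d}^{D} n_i = n$.

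The final step is to notice that these are exactly the two linear relations from which the proof of Theorem \ref{T:NB} is built, under the substitutions $\delta\mapsto d$, $\Delta\mapsto D$, $M_1\mapsto 2m(n-1)-M_1$, and $s_\alpha\mapsto S_\alpha$. Repeating that argument verbatim — from $n=\sum n_i$ and $\sum i\,n_i = 2m(n-1)-M_1$ solve for $n_D$ (resp.\ for $n_d$ and $n_{d+1}$), substitute into $NM2_\alpha(G)=\sum_{i=d}^{D} i^\alpha n_i$, and regroup (invoking Lemma \ref{Lemma} if one also wants the coefficient-sign statements) — yields the two displayed formulas. Since the only genuinely new ingredient is the identity $\delta_2(v) = 2m - d(v) - \delta_G(v)$, the one point that must be handled carefully is the diameter-$2$ reduction; everything after that is the bookkeeping already performed in Theorem \ref{T:NB}.
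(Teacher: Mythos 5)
Your proposal is correct and follows essentially the same route as the paper: both reduce the statement to the two linear relations $\sum_{i=d}^{D} n_i = n$ and $\sum_{i=d}^{D} i\,n_i = 2m(n-1)-M_1$ and then repeat the elimination argument of Theorem \ref{T:NB}. In fact your write-up is more complete than the paper's, since you actually derive the identity $\delta_2(v)=2m-d(v)-\delta_G(v)$ (and hence the second relation) from the diameter-$2$ hypothesis, whereas the paper simply asserts it.
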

\begin{proof}
We have $n=\sum_{i= d}^D n_i$ and $\sum_{i= d}^D in_i= 2m(n - 1) - M_1$. Solving these equations for $n_d$ and $n_D$ we get (1). Additionally, by solving the equations for  $n_d$ and  $n_{d + 1}$, we get (2).

\end{proof}

\section{Discussion and Conclusion}

In conclusion, we established formulas of the general neighborhood Zagreb index for graphs. Additionally, we found upper and lower bounds for the general neighborhood Zagreb index and determine graphs which attain the bounds. In addition, we established sharp upper and lower bounds for the general  neighborhood Zagreb index for some special graphs. We also found lower bound for the spectral radius. Additionally, we discussed generalization of the neighborhood degree which leads to more research problems.

Since there are more than 100 topological indices, there are many open problems of finding bounds and formulas for them. They play a crucial role in modeling various properties of chemical compounds. This process of mathematical modeling is very useful in real life because if reduces cost and saves time in developing medicines. It is fascinating to see that neighborhood degree based indices are useful in finding bounds for spectral radius as well as predicting physico-chemical properties of chemical compounds.

\bibliographystyle{amsplain}

\begin{thebibliography}{10}


\bibitem{ADG} Abdo, H., Dimitrov, D., \& Gutman, I. (2017). On extremal trees with respect to the F-index. \emph{Kuwait Journal of Science}, 44(3).

\bibitem{BC} Balasubramaniyan, D., \& Chidambaram, N. (2023). On some neighbourhood degree-based topological indices with QSPR analysis of asthma drugs. The European Physical Journal Plus, 138(9), 823.

\bibitem {BGG} Basak, S. C., Gute, B. D., \& Grunwald, G. D. (1996). Estimation of the normal boiling points of haloalkanes using molecular similarity. \emph{Croatica chemica acta}, 69(3), 1159-1173.

\bibitem{BHK} Benakli, N., Halleck, E., \& Kingan, S. R. (2019). A characterization of 2-neighborhood degree list of diameter 2 graphs. \emph{arXiv preprint arXiv:1909.08722}.


\bibitem{D} Došlic, T. (2008). Vertex-weighted Wiener polynomials for composite graphs. \emph{Ars Math. Contemp}, 1(1), 66-80.


\bibitem{FG} Furtula, B., \& Gutman, I. (2015). A forgotten topological index. \emph{Journal of mathematical chemistry}, 53(4), 1184-1190.


\bibitem{GFS} Gao, W., Farahani, M. R., \& Shi, L. (2016). Forgotten topological index of some drug structures. \emph{Acta medica mediterranea}, 32(1), 579-585.


\bibitem {GFMG}	Gutman, I., Furtula, B., Marković, V., \& Glišić, B. (2007). Alkanes with greatest Estrada index. \emph{Zeitschrift für Naturforschung A}, 62(9), 495-498.

\bibitem {GT} Gutman, I., \& Trinajstić, N. (1972). Graph theory and molecular orbitals. Total $\varphi$-electron energy of alternant hydrocarbons. \emph{Chemical physics letters}, 17(4), 535-538.

\bibitem{HLSXG} Hu, Y., Li, X., Shi, Y., Xu, T., \& Gutman, I. (2005). On molecular graphs with smallest and greatest zeroth-order general Randic index. \emph{MATCH Commun. Math. Comput. Chem}, 54(2), 425-434.


\bibitem {DB}	Klein, D. J. (2002). Topological Indices and Related Descriptors in QSAR and QSPR Edited by James Devillers \& Alexandru T. Balaban. Gordon and Breach Science Publishers: Singapore. 1999. 811 pp. 90-5699-239-2. \emph{Journal of Chemical Information and Computer Sciences}, 42(6), 1507-1507.


\bibitem{LZ} Li, X., \& Zhao, H. (2004). Trees with the first three smallest and largest generalized topological indices. \emph{MATCH Commun. Math. Comput. Chem}, 50, 57-62.

\bibitem{LZ2} Li, X., \& Zheng, J. (2005). A unified approach to the extremal trees for different indices. \emph{MATCH Commun. Math. Comput. Chem}, 54(1), 195-208.


\bibitem{MS} Mansour, T., \& Song, C. (2012). The $a$ and $(a, b)$‐Analogs of Zagreb Indices and Coindices of Graphs. \emph{International Journal of Combinatorics}, 2012(1), 909285.

\bibitem{MP} Mondal, S., De, N., \& Pal, A. (2019). On some general neighborhood degree based topological indices. \emph{International Journal of Applied Mathematics}, 32(6), 1037.



\bibitem{MDP} Mondal, S., De, N., \& Pal, A. (2021). On neighborhood Zagreb index of product graphs. \emph{Journal of molecular structure}, 1223, 129210.

\bibitem{MP2} Mondal, S., De, N., \& Pal, A. (2022). Topological indices of some chemical structures applied for the treatment of COVID-19 patients. Polycyclic Aromatic Compounds, 42(4), 1220-1234.

\bibitem{VC} Vaidya, S., \& Chang, J. (2024). Sharp Bounds for Generalized Zagreb Indices of Graphs. \emph{arXiv preprint arXiv:2409.06081}.

\bibitem{VS} Vaidya, S., \& Surendran, G. (2016). Bounds for topological indices of molecular graphs. \emph{Congr. Numer.} 227, 51-64.

\bibitem{SNT} Soleimani, N., Nikmehr, M. J., \& Tavallaee, H. A. (2015). Computation of the different topological indices of nanostructures. \emph{Journal of the national science foundation of Sri Lanka}, 43(2).

\bibitem {TC}	Todeschini, R., \& Consonni, V. (2008). Handbook of molecular descriptors. John Wiley \& Sons.

\bibitem{YLT} Yu, A., Lu, M., \& Tian, F. (2004). On the spectral radius of graphs. \emph{Linear algebra and its applications}, 387, 41-49.

\bibitem {B}	Zhou, B. (2008). On Estrada index. \emph{MATCH Commun. Math. Comput. Chem}, 60(2), 485-492.






















\end{thebibliography}

\end{document}